\documentclass[10pt]{article}
\usepackage{amsmath,amssymb,amsfonts}
\usepackage{epsf,color}
\usepackage{fancyhdr}
\usepackage{blindtext}
\usepackage{url}
\newtheorem{theorem}{Theorem}

\date{}

\title{Large Components and Trees of Random Mappings}
\bigskip

\author{  {\sc Ljuben Mutafchiev}\thanks{Corresponding author; E-mail: Ljuben@aubg.edu}\,\,\thanks{Also at: Institute of Mathematics
and Informatics, Bulgarian Academy of Sciences,
  Sofia 1113, Bulgaria} \\ \small American University in Bulgaria,
  2700 Blagoevgrad, Bulgaria
  \and
 {\sc Steven Finch}
 \\ \small MIT Sloan School of Management, Cambridge, MA 02142, USA}
\date{}
\begin{document}
\maketitle

\begin{abstract}
Let $\mathcal{T}_n$ be the set of all mappings $T:[n]\to[n]$, where
$[n]=\{1,2,\ldots,n\}$. The corresponding graph $G_T$ of $T$, called
a functional digraph, is a union of disjoint connected components.
Each component is a directed cycle of rooted labeled trees. We
assume that each $T\in\mathcal{T}_n$ is chosen uniformly at random
from the set $\mathcal{T}_n$. The components and trees of $G_T$ are
distinguished by their size. In this paper, we compute the limiting
conditional probability ($n\to\infty$) that a vertex from the
largest component of the random graph $G_T$, chosen uniformly at
random from $[n]$, belongs to its $s$-th largest tree, where $s\ge
1$ is a fixed integer. This limit can be also viewed as an
approximation of the probability that the $s$-th largest tree of
$G_T$ is a subgraph of its largest component, which is a solution of
a problem suggested by Mutafchiev and Finch (2024).

\end{abstract}

\vspace{.5cm}

 {\bf Mathematics Subject Classifications:} 60C05, 05C80

 {\bf Key words:} random mapping, random functional digraph, largest
 component, largest tree

\vspace{.2cm}

\section{Notation, Statement and Proof of the Main Result}

For a positive integer $n$, let $\mathcal{T}_n$ denote the set of
all mappings $T:[n]\to [n]$, where $[n]:=\{1,2,\ldots,n\}$. It is
clear that the cardinality $|\mathcal{T}_n|$ of $\mathcal{T}_n$ is
$n^n$. A mapping $T\in\mathcal{T}_n$ corresponds to a directed graph
$G_T$, called a functional digraph, with edges $(v,T(v)), v\in [n]$,
where every vertex $v\in [n]$ has out-degree $1$. $G_T$ is a union
of disjoint connected components. Each component of $G_T$ is a
directed cycle of rooted labeled trees. We introduce the uniform
probability measure $\mathbb{P}$ on the set $\mathcal{T}_n$. That
is, we assign the probability $n^{-n}$ to each $T\in\mathcal{T}_n$.
In this way, the numerical characteristics of $G_T$ become random
variables (or, statistics in the sense of random generation of
mappings from $\mathcal{T}_n$). The notation $\mathbb{E}$ stands for
the expectation with respect to $\mathbb{P}$.

Further on, we consider the largest component $m_n=m_n(T)$ and the
$s$-th largest tree $t_{n,s}=t_{n,s}(T)$ as subgraphs of the random
graph $G_T$. We denote their vertex-sets by $V(m_n)=V(m_n(T))$ and
$V(t_{n,s})=V(t_{n,s}(T))$, respectively. The cardinalities of these
vertex-sets, called sizes of $m_n(T)$ and $t_{n,s}(T)$, will be
denoted by $\mu_n=\mu_n(T):=|V(m_n(T))|$ and
$\tau_{n,s}=\tau_{n,s}(T):=|V(t_{n,s}(T))|$.

There is a substantial probabilistic literature on random mappings.
For the variety of methods used to obtain quantitative asymptotic
estimates on random mapping statistics, we direct the reader, e.g.,
to the books \cite{K86,ABT03} and the paper \cite{FO90}. For large
$n$, some properties of the functional digraphs $G_T,
T\in\mathcal{T}_n$, are closely related to random number generators
and algorithms for integer factorization. More details on these
applications may be found in \cite[Section 4.5.4]{K98}.

Next, we need to define a two-step sampling procedure that combines
the outcomes of two random experiments, in which: (i) we select a
mapping $T\in\mathcal{T}_n$ uniformly at random (step 1) and (ii) we
select a vertex $v\in [n]$ of the graph $G_T$ uniformly at random
(step 2). We shall use the concepts of a product sample space and
product probability measure for a pair of random experiments. For
more details on this subject, we refer the reader, e.g., to
\cite[Section 1.6]{GS01}. Hence the sample space of the pair of
experiments defined by (i) and (ii) is the set product
$\mathcal{T}_n\times [n]$. Let $\mathbf{P}(.)$ denote the
probability measure on this product space. Then, for a pair of
events $A_1\subset\mathcal{T}_n$ and $A_2\subset [n]$, we set
\begin{equation}\label{def}
\mathbf{P}(A_1\times A_2):=\mathbb{P}(T\in A_1)\frac{|A_2|}{n}.
\end{equation}
Returning to the random mapping statistics $\mu_n$ and $\tau_{n,s}$,
we set in (\ref{def}) $A_1=\{T\}$, $A_2=\{v\in V(m_n(T))\}$. Thus,
as a first step of our sampling, we choose a mapping $T$ with
probability $\mathbb{P}(\{T\})=n^{-n}$ and, as a second step, we
select a vertex $v\in V(m_n(T))$ with probability
$\frac{\mu_n(T)}{n}$. Then from (\ref{def}) it follows that
$$
\mathbf{P}(\{T\}\times\{v\in V(m_n)\})
=\mathbb{P}(\{T\})\frac{\mu_n(T)}{n}.
$$
Summation over all $T$ yields
\begin{equation}\label{mu}
\mathbf{P}(v\in V(m_n)) =\frac{1}{n}\mathbb{E}(\mu_n).
\end{equation}
In the same way, for $\{v\in V(t_{n,s})\}$, we obtain
\begin{equation}\label{tau}
\mathbf{P}(v\in V(t_{n,s})) =\frac{1}{n}\mathbb{E}(\tau_{n,s}).
\end{equation}

We are now ready to state our main result in terms of the limiting
conditional probability that the randomly chosen vertex $v\in [n]$
belongs to the $s$-th largest tree, given that it lies in the
largest component. Let us just note that our study is motivated by a
problem, which we formulated in \cite[p. 13]{MF24} as follows:
"...what can be said about the probability that the largest tree is
a subgraph of the largest component of a random mapping?"

\begin{theorem}.
(i) Let
\begin{equation}\label{ps}
p_s:=lim_{n\to\infty}\mathbf{P}(\{v\in V(t_{n,s})\}|\{v\in
V(m_n)\}).
\end{equation}
Then we have
\begin{equation}\label{e}
p_s=\lim_{n\to\infty}\frac{\mathbb{E}(\tau_{n,s})}{\mathbb{E}(\mu_n)}.
\end{equation}

(ii) For any fixed integer $s$, the limit $p_s$ exists, $p_s\le 1$,
and the first four values of $p_s$ are given by
\begin{eqnarray}\label{fourp}
& & p_1= 0.6380095879\ldots, \nonumber \\
& & p_2= 0.2111140604\ldots,  \\
& & p_3= 0.1083393241\ldots, \nonumber \\
& & p_4= 0.0667422345\ldots. \nonumber
\end{eqnarray}
\end{theorem}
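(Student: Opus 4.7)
The strategy for part (i) is to unfold the conditional probability through the product-measure identities (\ref{mu})--(\ref{tau}) and isolate the event that the $s$-th largest tree of $G_T$ sits outside the largest component. Writing
\[
\mathbf{P}(\{v\in V(t_{n,s})\}\mid\{v\in V(m_n)\})
=\frac{\mathbf{P}(\{v\in V(t_{n,s})\}\cap\{v\in V(m_n)\})}{\mathbf{P}(\{v\in V(m_n)\})},
\]
the denominator equals $\mathbb{E}(\mu_n)/n$ by (\ref{mu}); since each tree lies in a unique component, $|V(t_{n,s})\cap V(m_n)|$ equals $\tau_{n,s}$ on $\{t_{n,s}\subseteq m_n\}$ and vanishes otherwise, and the same two-step sampling argument that produced (\ref{tau}) rewrites the numerator as $\mathbb{E}[\tau_{n,s}\mathbf{1}_{\{t_{n,s}\subseteq m_n\}}]/n$. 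Consequently (\ref{e}) is equivalent to
\[
\mathbb{E}\bigl[\tau_{n,s}\mathbf{1}_{\{t_{n,s}\not\subseteq m_n\}}\bigr]=o\bigl(\mathbb{E}(\mu_n)\bigr),
\]
and since $\tau_{n,s}\le n$ while $\mathbb{E}(\mu_n)=\Theta(n)$, it suffices to establish $\mathbb{P}(t_{n,s}\not\subseteq m_n)\to 0$ for every fixed $s$.

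\medskip

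For part (ii), the existence of $p_s$ and the explicit value $p_s=d_s/c$ reduce, via part (i), to the asymptotic evaluations $\mathbb{E}(\mu_n)/n\to c$ and $\mathbb{E}(\tau_{n,s})/n\to d_s$ as positive constants. These limits and their integral representations were obtained in our previous paper \cite{MF24} by singularity analysis of the Cayley tree generating function $T(z)$ together with the mapping EGF $M(z)=1/(1-T(z))$ and the Flajolet--Odlyzko transfer theorems; the four numerical values in (\ref{fourp}) then follow by high-precision quadrature of the explicit integrals for $s=1,2,3,4$. The bound $p_s\le 1$ holds because $\tau_{n,s}\le\mu_n$ on $\{t_{n,s}\subseteq m_n\}$, whose complement has asymptotically vanishing probability by the part (i) estimate.

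\medskip

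The chief obstacle is the containment bound $\mathbb{P}(t_{n,s}\not\subseteq m_n)\to 0$, which cannot be read off from the marginal convergence of $\mu_n/n$ and $\tau_{n,s}/n$ alone. My planned route is a joint analysis: condition on the ordered component-size vector $(\mu_n^{(k)})_{k\ge 1}$, use that each component is, given its size, a uniformly random connected mapping whose in-component largest tree occupies a positive fraction of that component in probability, and combine this with the Aldous--Pitman Brownian excursion description of the weak limit of $(\mu_n^{(k)}/n)$. This should force the $s$ globally largest trees to coincide with the $s$ largest trees of the dominant component with probability tending to $1$. Should the coupling prove awkward, a purely analytic backup is available: sum the relevant generating functions over admissible component-size profiles, and extract the decay of $\mathbb{P}(t_{n,s}\not\subseteq m_n)$ by saddle-point methods along the lines used in \cite{FO90} and \cite{MF24}.
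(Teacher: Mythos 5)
Your bookkeeping for part (i) is the right one, and it is in fact more careful than the paper's: the numerator of the conditional probability equals $\tfrac{1}{n}\mathbb{E}\bigl[\tau_{n,s}\mathbf{1}_{\{t_{n,s}\subseteq m_n\}}\bigr]$, not $\tfrac{1}{n}\mathbb{E}(\tau_{n,s})$, so (\ref{e}) is equivalent to $\mathbb{E}\bigl[\tau_{n,s}\mathbf{1}_{\{t_{n,s}\not\subseteq m_n\}}\bigr]=o(n)$. (The paper's proof jumps from $\mathbf{P}(v\in V(t_{n,s})\cap V(m_n))$ to $\mathbf{P}(v\in V(t_{n,s}))$ by reasoning only on the event $\{v\in V(t_{n,s})\cap V(m_n)\}$; your version exposes the error term that step silently drops.) The genuine gap is that your proposal never establishes the containment bound $\mathbb{P}(t_{n,s}\not\subseteq m_n)\to 0$ — it is only a "planned route" with a "backup" — and, worse, that bound is false, so neither route can succeed. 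The mechanism: the ordered component fractions $\mu_{n,r}/n$ converge jointly to a nondegenerate Poisson--Dirichlet-type law under which the second largest component occupies a fraction of $[n]$ bounded away from zero with probability bounded away from zero, and, conditionally on its size, that component contains a tree of size a positive fraction of itself; so the globally $s$-th largest tree lies outside $m_n$ with asymptotically positive probability. A decisive check that something must fail: the events $\{v\in V(t_{n,s})\}$, $s\ge 1$, are pairwise disjoint (distinct trees are vertex-disjoint), so $\sum_{s=1}^{4}\mathbf{P}(\{v\in V(t_{n,s})\}\mid\{v\in V(m_n)\})\le 1$ for every $n$, whereas the four values $c_s/c$ in (\ref{fourp}) sum to $1.0242\ldots>1$. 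Hence the limits defined by (\ref{ps}) cannot all equal $c_s/c$; the quantity $c_s/c$ is $\lim\mathbb{E}(\tau_{n,s})/\mathbb{E}(\mu_n)$, and it differs from $p_s$ by exactly the non-vanishing term $\mathbb{E}[\tau_{n,s}\mathbf{1}_{\{t_{n,s}\not\subseteq m_n\}}]/\mathbb{E}(\mu_n)$ that your reduction isolates.

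Two smaller points on part (ii). The asymptotics $\mathbb{E}(\mu_n)\sim cn$ and $\mathbb{E}(\tau_{n,s})\sim c_s n$ are due to Flajolet and Odlyzko \cite{FO90} and to Gourdon \cite{G96,G98}, not to \cite{MF24}; with those citations the existence and numerical evaluation of $\lim\mathbb{E}(\tau_{n,s})/\mathbb{E}(\mu_n)=c_s/c$ go through exactly as in the paper. Also, $\tau_{n,s}\le\mu_n$ holds deterministically for every $T$ (each tree sits inside a single component, whose size is at most $\mu_n$), so the bound $c_s/c\le 1$ needs no containment event at all; your restriction of this inequality to $\{t_{n,s}\subseteq m_n\}$ is both unnecessary and, given the failure of the containment bound, not a usable substitute.
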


\begin{proof}
(i) First, we note that, for a vertex $v\in [n]$ of the random graph
$G_T$, we have $\{v\in V(t_{n,s})\}\cap\{v\in V(m_n)\}=\{v\in
V(t_{n,s})\cap V(m_n)\}$. Suppose now that a randomly chosen vertex
$v\in V(t_{n,s})\cap V(m_n)$. Hence $V(t_{n,s})\cap
V(m_n)\neq\emptyset$ and therefore, since the components of $G_T$
are disjoint, we conclude that $V(t_{n,s})\subset V(m_n)$ (that is,
$t_{n,s}$ is a subgraph of $m_n$). Thus $V(t_{n,s})\cap
V(m_n)=V(t_{n,s})$ and the numerator of the conditional probability
in (\ref{ps}) is $\mathbf{P}(\{v\in V(t_{n,s})\})$. Hence
$$
\mathbf{P}(\{v\in V(t_{n,s})\}|\{v\in V(m_n)\})
=\frac{\mathbf{P}(v\in V(t_{n,s}))}{\mathbf{P}(v\in V(m_n))}.
$$
Applying (\ref{mu}) and (\ref{tau}), we get (\ref{e}). \hfill
$\Box$

(ii) In the computation of $p_s, s=1,2,3,4$, we shall use the main
terms in the asymptotic expansions of $\mathbb{E}(\mu_n)$ and
$\mathbb{E}(\tau_{n,s})$. It is known that
\begin{equation}\label{asmu}
\mathbb{E}(\mu_n)\sim cn,\quad \text{where}\quad
c=0.7578230112\ldots,\quad n\to\infty.
\end{equation}
The asymptotic equivalence (\ref{asmu}) and the value of the
constant $c$ were first derived by Flajolet and Odlyzko
\cite[Theorem 8, p. 347]{FO90}. The constant $c$ is given in A143297
of \cite{S} and named Flajolet-Odlyzko constant in \cite[Section
5.4.2]{F03}. Gourdon obtained (\ref{asmu}) in his thesis \cite[p.
152]{G96}. He then published it in a subsequent paper
\cite[Corollary 2, p. 200]{G98}. In a more general context, the
constant $c$ appears also in \cite[Table 5.1]{ABT03}.

Similarly, for $\mathbb{E}(\tau_{n,s})$, we have
\begin{equation}\label{astau}
\mathbb{E}(\tau_{n,s})\sim c_s n,\quad n\to\infty,
\end{equation}
where the constants $c_s$ are explicitly given. This result and
numerical evaluations were obtained in Gourdon's thesis \cite[p.
188]{G96}. The case $s=1$ (the largest tree in a random mapping) was
also published in \cite[p. 202]{G98} and the value of the constant
$c_1$ is given in A271871 of \cite{S} and \cite[Section 5.4.2]{F03}.
Since $\tau_{n,s}$ does not exceed the size of the component to
which the largest tree belongs and $\mu_n$ is the maximum component
size of the random graph $G_T$, for all $n,s\ge 1$, we have
$\tau_{n,s}\le\mu_n$. Hence
\begin{equation}\label{taumu}
\mathbb{E}(\tau_{n,s})\le\mathbb{E}(\mu_n).
\end{equation}
Combining (\ref{asmu}) - (\ref{taumu}), we conclude that the limit
(\ref{e}) exists and
\begin{equation}\label{csc}
p_s=\frac{c_s}{c}\le 1.
\end{equation}
Below we list the first four values of $c_s$ from \cite[p.
188]{G96}:
\begin{eqnarray}\label{fourc}
& & c_1=0.4834983471\ldots, \nonumber \\
& & c_2=0.1599870930\ldots, \\
& & c_3=0.0821020328\ldots, \nonumber \\
& & c_4=0.0505788011\ldots. \nonumber
\end{eqnarray}
The probabilities from (\ref{fourp}) are then obtained using
(\ref{csc}), (\ref{fourc}) and the value of $c$ from
(\ref{asmu}).\hfill $\Box$
\end{proof}

\section{Concluding Remarks}

As mentioned in the previous section, our main result sheds light on
problems concerning the relationship between large components and
large trees of a random mapping graph with large number of vertices.
The case $s=1$ solves a problem suggested in \cite[p. 13]{MF24}. We
obtain the limiting probability that the largest component of a
random graph $G_T, T\in\mathcal{T}_n$, contains its $s$-th largest
tree $t_{n,s}$ as $n\to\infty$. We conclude this section with two
open problems.

{\it 1. The probability that the largest component contains two
large trees.} Consider the particular case, where the largest
component $m_n$ of a random graph $G_T$ contains the first two
largest trees $t_{n,1}$ and $t_{n,2}$. What can be said about the
probability of this event? To answer this question in a way similar
to that from Section 1, we modify the second step of the random
experiment defined there by selecting two vertices
$\{v_1,v_2\}\subset [n]$ of the graph $G_T$ with probability
${n\choose 2}^{-1}$. This will obviously change the product
probability space into $\mathcal{T}_n\times\{\{u,v\}: u,v\in [n],
u\neq v\}$. Using the new product probability measure on this
product space, we establish that the formula for the conditional
probability $\mathbb{P}(v_1\in V(t_{n,1}), v_2\in
V(t_{n,2})|\{v_1,v_2\}\subset V(m_n))$ involves the second moment of
the size $\mu_n$ of the largest component of $G_T$. We note that the
leading term in the asymptotic expansion of the variance of $\mu_n$,
as $n\to\infty$, is given in \cite[p. 152]{G96}. This result, in
combination with (\ref{asmu}), could be used to determine the
asymptotics of the second moment of $\mu_n$.

{\it 2. The probability that the $r$-th largest component contains
the $s$-th largest tree: the case $r\ge 2$}. Let $m_{n,r}$ be the
$r$-th largest component of $G_T$ and let $\mu_{n,r}$ be the
cardinality of its vertex-set $V(m_{n,r})$. In this paper, we
examine only the case $r=1$. In the proof, we essentially used the
inequality (\ref{taumu}), which is true since, for all integers
$n,s\ge 1$, $\tau_{n,s}\le\mu_{n,1}=\mu_n$ with probability $1$. It
seems that a solution of a similar problem in the case $r\ge 2$ will
encounter more technical difficulties. In fact, in this case the
probability that $\tau_{n,s}\le\mu_{n,r}$ is not $1$ and one has to
estimate it. This requires some extra computations using the
distribution functions of $\mu_{n,r}, r\ge 2$, and $\tau_{n,s}$. We
recall that, for fixed integers $r,s\ge 1$, the limiting
distribution functions of $\mu_{n,r}/n$ and $\tau_{n,s}/n$ as
$n\to\infty$ were already determined (see \cite[Section 1.13]{K86}
and \cite[p.188]{G96}, respectively). These results would be helpful
to obtain certain asymptotic estimates in the case $r\ge 2$.

\end{document}